\theoremstyle{plain}
\newtheorem{theo}{Theorem}[section]
\newtheorem{prop}[theo]{Proposition}
\theoremstyle{definition}
\newtheorem{definition}[theo]{Definition}
\theoremstyle{remark}
\newtheorem{rem}[theo]{Remark}
\numberwithin{equation}{section}
\newcommand{\R}{\mathbb{R}}
\newcommand{\divrg}{\textrm{div}\,}
\title{On doubling inequalities for elliptic systems
\thanks{Work supported in part by MiUR, PRIN no. 2006014115.}}
\author{Giovanni Alessandrini\thanks{Dipartimento di Matematica e Informatica, Universit\`a degli Studi di Trieste, via Valerio 12/1,
34127 Trieste, Italy.  \textsf{alessang@units.it}.}, Antonino
Morassi\thanks{Dipartimento di Georisorse e Territorio,
Universit\`a degli Studi di Udine, via Cotonificio 114, 33100
Udine, Italy. \textsf{antonino.morassi@uniud.it}.}, \\  Edi
Rosset\thanks{Dipartimento di Matematica e Informatica,
Universit\`a degli Studi di Trieste, via Valerio 12/1, 34127
Trieste, Italy.  \textsf{rossedi@units.it}.} \ and Sergio
Vessella\thanks{DIMAD, Universit\`a degli Studi di Firenze, Via
Lombroso 6/17, 50134 Firenze, Italy,
\textsf{sergio.vessella@dmd.unifi.it}.}}
\begin{document}

\maketitle

\noindent \textbf{Abstract.} We prove doubling inequalities for
solutions of elliptic systems with an iterated Laplacian as
diagonal principal part and for solutions of the Lam\'e system of
isotropic linearized elasticity. These inequalities depend on
global properties of the solutions.
\medskip

\medskip

\noindent \textbf{Mathematical Subject Classifications (2000):}
Primary: 35J55; Secondary: 35R30, 74B05.

\medskip

\medskip

\noindent \textbf{Key words:} elliptic systems, unique
continuation, elasticity, inverse boundary problems

\section{Introduction} \label{sec:introduction}
The doubling property is a basic measure theoretic concept
(\cite{l:gcrdf}, \cite{l:h}). Its connection with the strong
unique continuation principle for elliptic partial differential
equations became evident in the geometrical approach to unique
continuation developed by N. Garofalo and F.--H. Lin \cite{l:gl1,
l:gl2}. Subsequently, it turned out to be an important tool for
obtaining quantitative estimates suitable for stability estimates
in inverse boundary value problems \cite{abrvpisa, l:ve08, l:mr04,
l:mr05} and, also in connection with inverse boundary value
problems, for volume bounds of unknown inclusions in terms of
boundary measurements \cite{l:ar98, l:ars00, l:amr03}. Let us
illustrate the underlying idea with an example. Consider an
elliptic equation
\begin{equation}\label{eq:elliptic}
\divrg ( \sigma \nabla u ) = 0 \ , \text{ in } \Omega \ ,
\end{equation}
where $\Omega$ is a bounded open set with sufficiently smooth
boundary and $\sigma(x) = \{\sigma_{ij}(x)\}$ is a symmetric
matrix of coefficients, satisfying a uniform ellipticity condition
and such that $\sigma_{ij} \in C^{0,1}$, and consider the solution
$u$ to \eqref{eq:elliptic} satisfying the Dirichlet condition
\begin{equation}\label{eq:dirchcond}
 u  = g \ , \text{ on } \partial\Omega \ .
\end{equation}
The doubling property then says that for any compact subset $G$ of
$\Omega$ and for any concentric balls $B_r, B_{2r} \subset G$ we
have
\begin{equation}\label{eq:basicdoubl}
 \int_{B_{2r}}u^2 \leq K\int_{B_{r}}u^2 \ ,
\end{equation}
where $K$ is a constant which depends on $\Omega, G,$ the
ellipticity and regularity bounds on the coefficients, but also
\emph{necessarily}, on $u$. It is in fact evident, just by looking
at homogeneous harmonic polynomials in the unit ball, that the
above constant must diverge with the degree of the polynomials.

For the purposes of inverse boundary value problems, it is often
important that such a constant $K$ is estimated in terms of the
known boundary data $g$ and not on interior values of the solution
$u$ which may be unknown. Typically one expects that $C$ can be
bounded in terms of a ratio of the form
\begin{equation}\label{eq:basicfreq}
F(g) = \frac{\|g\|_{H^{1/2}(\partial
\Omega)}}{\|g\|_{L^{2}(\partial \Omega)}} \ .
\end{equation}
This ratio is usually called a \emph{frequency function}, and
Garofalo and Lin \cite{l:gl1} attributed this concept to Almgren
\cite{almg}. The specific choice of the norms in the ratio may
vary depending on the boundary value problem, and on the
functional framework. But the general idea is that the norm on the
numerator is of higher order than the one on the denominator so
that $F(g)$ resembles a Rayleigh quotient.

This theory can be considered well--settled within the area of
scalar elliptic equations \cite{l:ars00}. In the case of systems,
since the same issue of unique continuation maintains unanswered
questions, the study of doubling inequalities is still in
progress.

For the Lam\'e system of isotropic linearized elasticity, the
strong unique continuation is known when the coefficients $\mu,
\lambda \in C^{1,1}$ \cite{l:am01}, see also Escauriaza
\cite{esca}, for improved results in the $2$--dimensional case. In
fact, in \cite{l:am01} a doubling inequality of the following form
was proved
\begin{equation}\label{intdiv}
\int_{B_{2r}}|u|^2 + |\divrg u |^2\leq C\int_{B_{r}}|u|^2 +
|\divrg u |^2 \ ,
\end{equation}
from which the strong unique continuation can be easily derived.
However, it is not clear whether, from such an inequality, one can
derive a doubling inequality for $\int_{B_{r}}|u|^2$ only. In fact
such form of the doubling inequality was claimed in \cite[Theorem
$3.9$]{l:amr02}, but, unfortunately, the proof given there
contained a gap.

More recently, doubling inequalities have been studied for systems
with diagonal principal part which is either the Laplacian
$\Delta$ \cite{l:lnaka08} or the iterated Laplacian $\Delta^l$
\cite{l:lnw08}, and in fact the coefficients in the lower order
terms are also allowed to be singular. In these papers, local
forms of doubling inequalities were obtained.

In this note, our aim is twofold. First, we show that for elliptic
systems \emph{with diagonal principal part given by $\Delta^l$ and
bounded lower order terms}, a global form of doubling inequality
holds, see Theorem \ref{theo:Doubling-LNW}. Second, we apply this
result to the Lam\'e system, by observing that, assuming in
addition $\mu, \lambda \in C^{2,1}$, such a system can be reduced
to a $4$--th order system with $\Delta^2$ as its diagonal
principal part. Thus by such means, we restore the validity of the
claimed Theorem $3.9$ in \cite{l:amr02} and consequently of
Proposition $4.3$ in \cite{l:amr02} and Theorem 4.8 in
\cite{l:amr03}, at least under the regularity assumptions $\mu,
\lambda \in C^{2,1}$.

It remains open the issue whether the doubling inequality holds
under the assumption $\mu, \lambda \in C^{1,1}$, whereas it is
well--known that a challenging open question is whether unique
continuation in general holds true when $\mu, \lambda \in
C^{0,1}$, we refer again to Escauriaza \cite{esca}, for the state
of the art in the $2$--dimensional case.

The plan of the paper is as follows. In Section \ref{sec:notation}
we set up our notation and formulate the pure traction boundary
value problem for the Lam\'e system of linearized elasticity. In
Section \ref{sec:results} we first formulate a three--spheres
inequality for solutions of systems with $\Delta^l$ as diagonal
principal part, Theorem \ref{theo:3spheres-LNW}, which is an
immediate  consequence of a result  of C.--L. Lin, S. Nagayasu,
and J.N. Wang \cite{l:lnw08}. Next we apply such a three--spheres
inequality to derive a so--called estimate of \emph{propagation of
smallness}, Theorem \ref{theo:LipPropSmall}. Then, in Theorem
\ref{theo:localDoubl}, we recall the local version of the doubling
inequality proved by C.--L. Lin, S. Nagayasu, and J.N. Wang
\cite[Theorem $1.3$]{l:lnw08}, and we arrive at our global
version, Theorem \ref{theo:Doubling-LNW}. The doubling inequality
we obtain has a constant $K$ which, among other quantities,
depends on a frequency function given by the ratio
$\|u\|_{H^{1/2}(\Omega)}/\|u\|_{L^2(\Omega)}$. Depending on which
is the appropriate boundary condition that may be prescribed, such
ratio could be dominated by a suitable ratio of norms which only
involve the boundary data. This process is exemplified in the
following Theorem \ref{prop:Doubling_global} where the doubling
inequality for the Lam\'e system is obtained and the doubling
constant $K$ is controlled in terms of a ratio of norms of the
boundary traction field $\varphi$. The bridge between the two
Theorems \ref{theo:Doubling-LNW}, \ref{prop:Doubling_global} is
provided by Proposition \ref{prop:Reduction} which enables to
reduce the Lam\'e system to a system with $\Delta^2$ as diagonal
principal part.

\section{Notation} \label{sec:notation}

Throughout this paper we shall consider a bounded domain $\Omega$
in ${\R}^{n}$, $n\geq2$, having Lipschitz boundary with constants
$r_{0}$, $M_{0}$ according to the following definition.
\begin{definition}
  \label{def:Lip} (Lipschitz regularity)
  Given a domain $\Omega$, we shall say that $\partial \Omega$ is of
\textit{Lipschitz class with constants
  $r_0$, $M_0$}, if, for any $x_{0} \in \partial \Omega$,
  there exists a rigid transformation of coordinates under which we
  have $x_{0}=0$ and
\begin{equation*}
  \Omega \cap B_{r_0}(0)=\{x \in B_{r_0}(0)\quad | \quad
  x_{n}>\psi(x')
  \},
\end{equation*}
where for $x\in\R^n$, we set $x=(x',x_n)$, with $x'\in R^{n-1}$,
$x_n\in \R$ and where $\psi$ is a Lipschitz continuous function on
$B_{r_0}(0) \subset {\R}^{n-1}$ satisfying
\begin{equation*}
\psi(0)=0
\end{equation*}
and
\begin{equation*}
\|\psi\|_{{C}^{0,1}(B_{r_0}(0))} \leq M_0r_0.
\end{equation*}
\end{definition}
Given a bounded domain $\Omega \subset {\R}^{n}$, for any $d>0$ we
shall denote
\begin{equation}
  \label{eq:2.1}
  \Omega_{d}=\{x \in \Omega \mid \textrm{dist}(x,\partial \Omega)>d \}.
\end{equation}

Moreover, when no ambiguity occurs, we shall denote for brevity by
$B_R$ any ball in $\R^n$ of radius $R$.

Let us consider weak solutions $u \in H^1(\Omega, \R^n)$ to the
Lam\'e system
\begin{equation}
  \label{eq:sys-Lame}
  \divrg (\mu (\nabla u + (\nabla u)^T)) + \nabla(\lambda \divrg u)=0
  \quad \mathrm{in}\  \Omega,
\end{equation}
which describes the equilibrium of a body $\Omega$ made by linear
elastic isotropic material when body forces are absent. Here,
$(\nabla u)^T$ denotes the transpose of the matrix $\nabla u$. In
equation \eqref{eq:sys-Lame}, $\mu=\mu(x)$ and
$\lambda=\lambda(x)$ are the Lam\'e moduli of the material.

In this paper we shall assume $\mu \in
C^{2,1}(\overline{\Omega})$, $\lambda \in
C^{2,1}(\overline{\Omega})$ with
\begin{equation}
    \label{eq:reg-bound}
    \|\mu\|_{C^{2,1}(\overline{\Omega})}+\|\lambda\|_{C^{2,1}(\overline{\Omega})}
    \leq M,
\end{equation}
for some positive constant $M$.

We shall say that $\mu$ and $\lambda$ satisfy the \textit{strong
convexity} condition if
\begin{equation}
    \label{eq:strong-convex}
    \mu(x) \geq \alpha_0>0, \quad 2\mu(x)+n\lambda(x) \geq
    \gamma_0 >0 \quad \hbox{in } \overline{\Omega},
\end{equation}
whereas they satisfy the \textit{strong ellipticity} condition if
\begin{equation}
    \label{eq:strong-ellipt}
    \mu(x) \geq \alpha_0>0, \quad 2\mu(x)+\lambda(x) \geq
    \beta_0 >0 \quad \hbox{in } \overline{\Omega},
\end{equation}
where $\alpha_0$, $\beta_0$, $\gamma_0$ are positive constants. It
is well known that condition \eqref{eq:strong-convex} implies
\eqref{eq:strong-ellipt}, with
$\beta_0=\min\{2\alpha_0,\gamma_0\}$.

We shall prescribe a boundary traction field $\varphi \in
{L}^{2}(\partial \Omega, {\R}^{n})$ satisfying the compatibility
condition
\begin{equation}
  \label{eq:2.21}
  \int_{\partial \Omega} \varphi \cdot r =0
  \end{equation}
for every \textit{infinitesimal rigid displacement} $r$, that is
$r(x)=c+Wx$, where $c$ any constant $n-$vector and $W$ is any
constant skew $n \times n$ matrix. Namely, we shall consider weak
solutions $u \in H^{1}(\Omega,\R^{n})$ of the following problem:
\begin{equation}
  \label{eq:2.22}
  \divrg (\mu (\nabla u +(\nabla u)^{T}))+\nabla(\lambda \divrg u)=0 \quad \mathrm{in}\ \Omega,
\end{equation}
\begin{equation}
  \label{eq:2.23}
  (\mu (\nabla u +(\nabla u)^{T}))+ \lambda(\divrg u)I_n) \nu=\varphi \quad \mathrm{on}\ \partial
  \Omega,
\end{equation}
where $I_n$ is the $n \times n$ identity matrix and $\nu$ is the
unit exterior normal to $\partial \Omega$.

Regarding existence, we recall that, provided the compatibility
condition  \eqref{eq:2.21} is satisfied, a solution of the
traction problem \eqref{eq:2.22}, \eqref{eq:2.23} exists as long
as the Lam\'e moduli $\mu$ and $\lambda$ either belong to
$L^\infty({\Omega})$ and satisfy the strong convexity condition,
or they are continuous and satisfy the strong ellipticity
condition, see for instance Valent \cite[\S III]{l:v88}.

With respect to uniqueness, it is well-known that the solution $u$
to the above problem is uniquely determined up to an infinitesimal
rigid displacement. In order to uniquely identify such solution,
we shall assume {from} now on that $u$ satisfies the following
normalization conditions
\begin{equation}
  \label{eq:2.25bis}
  \int_{\Omega} u =0, \quad \int_{\Omega} (\nabla u - (\nabla u)^{T})=0.
  \end{equation}

\section{Results} \label{sec:results}

Let $\Omega$ be a bounded domain in $\R^n$ with boundary $\partial
\Omega$ of Lipschitz class with constants $r_0$ and $M_0$. Let
$u=(u^1, ..., u^n) \in H^{2l}(\Omega, \R^n)$ be a solution to the
system of differential inequalities
\begin{equation}
  \label{eq:p1-e2}
  |\Delta^l u^i| \leq K_0 \sum_{|\alpha|\leq \left [\frac{3l}{2} \right ]}
  |D^{\alpha}u| \ ,
  \ \ \ i=1,...,n\ .
\end{equation}

\begin{theo} [Three spheres inequality]
  \label{theo:3spheres-LNW}
    Let $B_R \subset \Omega$. There exists a positive
    number $\vartheta < e^{-1/2}$, only depending on $n$, $l$, $K_0$,
     such that for every $r_1, r_2,
    r_3$, $0<r_1<r_2<\vartheta r_3$, $r_3 \leq R$, we have
\begin{equation}
  \label{eq:p1-e3}
  \int_{B_{r_2}} |u|^2 dx \leq C
  \left  ( \int_{B_{r_1}} |u|^2 dx \right )^\delta
  \left  ( \int_{B_{r_3}} |u|^2 dx \right )^{1-\delta},
\end{equation}
for every $u \in H^{2l}(\Omega, \R^n)$ satisfying
\eqref{eq:p1-e2}, where the constants $C$ and $\delta$, $C>0$,
$0<\delta < 1$, only depend on $n$, $l$, $K_0$, $r_1/r_3$,
$r_2/r_3$, and where the balls $B_{r_i}$, $i=1,2,3$, have the same
center as $B_R$.
\end{theo}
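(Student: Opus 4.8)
The statement is explicitly attributed to Lin--Nagayasu--Wang, so the natural plan is to derive the three spheres inequality \eqref{eq:p1-e3} by reducing the system of differential inequalities \eqref{eq:p1-e2} to the setting in which a Carleman estimate (or an already-established three spheres inequality) is available. First I would recall that \cite{l:lnw08} establishes a quantitative unique continuation estimate for solutions of $|\Delta^l w| \leq K_0 \sum_{|\alpha| \leq [3l/2]} |D^\alpha w|$; the key point is that the right-hand side involves derivatives only up to order $[3l/2] < 2l$, which is exactly the order gap that makes the iterated Laplacian amenable to the Carleman weights used there. Since \eqref{eq:p1-e2} couples the components $u^1, \dots, u^n$ only through the zero-th-through-$[3l/2]$-order term (the principal part $\Delta^l$ acts diagonally), the vector-valued case follows from the scalar case by applying the scalar estimate componentwise and summing, absorbing the coupling terms into the constant $K_0$; thus there is no real loss in treating $u$ as scalar.

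The core mechanism is a Carleman estimate of the form $\int \rho^{-2\tau} \,\omega\, |\Delta^l w|^2 \geq c \sum_{|\alpha| \leq 2l} \tau^{m(\alpha)} \int \rho^{-2\tau - 2(2l - |\alpha|)} \,\omega'\, |D^\alpha w|^2$ for a weight $\rho(x) = |x|$ (or a regularized version), valid for $w$ compactly supported away from the origin and for $\tau$ large. Given such an estimate, the three spheres inequality is obtained by the classical device: fix concentric balls of radii $r_1 < r_2 < \vartheta r_3 \leq R$, choose a cutoff function $\chi$ equal to $1$ on $B_{r_2} \setminus B_{r_1}$ and supported in $B_{r_3} \setminus B_{r_1/2}$, apply the Carleman estimate to $\chi u$, use \eqref{eq:p1-e2} to absorb the bulk term $\int \rho^{-2\tau} |\Delta^l u|^2$ into the left side for $\tau$ large (this is where the condition $|\alpha| \leq [3l/2] < 2l$ is essential, since every absorbed term carries at least two extra powers of $\tau^{-1}$ relative to the dominant terms), and then the commutator terms $[\Delta^l, \chi] u$ are supported in the two shells $B_{r_1/2} \leq |x| \leq r_1$ and $r_2 \leq |x| \leq r_3$. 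Estimating the weight $\rho^{-2\tau}$ from above and below on the relevant annuli and optimizing over $\tau$ yields \eqref{eq:p1-e3} with the interpolation exponent $\delta \in (0,1)$ depending only on the ratios $r_1/r_3$, $r_2/r_3$ (and on $n$, $l$, $K_0$), together with the restriction $\vartheta < e^{-1/2}$ arising from the convexity range of the logarithmic weight. Invoking interior elliptic regularity lets one pass from the a priori $H^{2l}$ hypothesis to the control of the lower-order norms appearing in \eqref{eq:p1-e2}.

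The one technical subtlety I would flag is the appearance of the fractional-looking exponent $[3l/2]$: the Carleman estimate for $\Delta^l$ that one needs is not merely the $l$-fold iteration of the Laplacian Carleman estimate (which would only control terms up to order $l$ on the right) but a sharper one capturing $[3l/2]$ orders, and establishing it requires either a careful induction on $l$ with a factorization $\Delta^l = \Delta^{\lceil l/2 \rceil} \Delta^{\lfloor l/2 \rfloor}$ or a direct spherical-harmonics/Hardy-inequality computation near the origin. Since Theorem \ref{theo:3spheres-LNW} is presented as an immediate consequence of \cite{l:lnw08}, in the write-up I would simply cite the relevant Carleman estimate or three spheres statement from that paper, indicate the componentwise reduction from the vector to the scalar case, and note that the dependence of $C$, $\delta$ on the stated parameters is exactly as produced by the standard cutoff-and-optimization argument.
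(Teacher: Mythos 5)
Your proposal is correct and follows essentially the same route as the paper: the paper's proof is simply the observation that the statement is a special case of Theorem 1.1 in the cited Lin--Nagayasu--Wang preprint (which even allows singular weights at the center), exactly the citation you end up invoking. Your sketch of the underlying Carleman machinery and the componentwise reduction is a reasonable gloss on that reference, but no further argument is given or needed in the paper itself.
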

\begin{proof} This is in fact a special case of Theorem $1.1$ in \cite{l:lnw08},
where, in the inequalities \eqref{eq:p1-e2}, suitable
singularities at the center of the balls $B_{r_i}$,  $B_R$ are
also allowed.
\end{proof}

\begin{theo} [Lipschitz propagation of smallness]
  \label{theo:LipPropSmall}
  Under the previous assumptions, for every $\rho >0$ and for
  every $x \in \Omega_{\frac{4\rho}{\vartheta}}$, we have
\begin{equation}
  \label{eq:p2-e4}
  \int_{B_{\rho}(x)} |u|^2 dx \geq C_{\rho}
  \int_{\Omega} |u|^2 dx,
\end{equation}
where $\vartheta$ has been defined in Theorem
\ref{theo:3spheres-LNW} and $C_\rho$ only depends on $n$, $l$,
$K_0$, $r_0$, $M_0$, $|\Omega|$,
$\|u\|_{H^{1/2}(\Omega)}/\|u\|_{L^2(\Omega)}$ and $\rho$.
\end{theo}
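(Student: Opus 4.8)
The plan is to derive \eqref{eq:p2-e4} from the three spheres inequality \eqref{eq:p1-e3} by the classical chain-of-balls propagation, after using the frequency ratio $F:=\|u\|_{H^{1/2}(\Omega)}/\|u\|_{L^2(\Omega)}$ to locate a region well inside $\Omega$ that carries at least half of the $L^2$-mass of $u$. Throughout we may assume $\int_\Omega|u|^2=1$. The first step is a boundary-layer estimate: exploiting the Lipschitz character of $\partial\Omega$, a fractional Hardy inequality of the form $\int_\Omega|u|^2\,\mathrm{dist}(x,\partial\Omega)^{-2s}\,dx\le C\,\|u\|^2_{H^{1/2}(\Omega)}$, valid for any fixed $s\in(0,\tfrac12)$ with $C$ depending only on $n$, $s$, $r_0$, $M_0$, $|\Omega|$, gives for every $d>0$
\begin{equation*}
\int_{\Omega\setminus\Omega_d}|u|^2\,dx\ \le\ C\,d^{2s}\,F^2 .
\end{equation*}
Hence there is $d_0>0$, depending only on $n$, $r_0$, $M_0$, $|\Omega|$ and $F$, with $\int_{\Omega_{d_0}}|u|^2\ge\tfrac12$; shrinking $d_0$ if necessary we may also take $d_0\le c_1 r_0$, where $c_1=c_1(M_0)$ is chosen so that, for every $0<d\le d_0$, $\Omega_d$ is connected and any two of its points can be joined by an arc lying in $\Omega_d$ of length at most $C_2(r_0,M_0)\,|\Omega|\,d^{\,1-n}$ (both being standard quantitative consequences of the Lipschitz regularity).

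Next I would fix a working scale. Set $d:=\min\{d_0,\,4\rho/\vartheta\}$ and $a:=\vartheta d/8$, so that $a\le\rho$ and, for every $z\in\Omega_d$, a radius $r$ slightly larger than $d/4$ satisfies $B_{r}(z)\subset\Omega$ and makes the triple $a<2a<\vartheta r$ admissible in Theorem \ref{theo:3spheres-LNW} (with $R$ there equal to $\mathrm{dist}(z,\partial\Omega)$). Covering $\Omega_d$ by $N_0\le C(n)\,|\Omega|\,a^{-n}$ balls of radius $a$ with centres in $\Omega_d$, the pigeonhole principle produces a point $y_0\in\Omega_d$ with $\int_{B_a(y_0)}|u|^2\ge(2N_0)^{-1}$. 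Given $x\in\Omega_{4\rho/\vartheta}\subset\Omega_d$, I would join $y_0$ to $x$ by an arc in $\Omega_d$ and pick along it points $y_0=z_0,z_1,\dots,z_m=x$ with $|z_{j+1}-z_j|\le a$ and $m\le N:=C\,|\Omega|\,d^{\,1-n}a^{-1}$, so that $N$ depends only on $n$, $r_0$, $M_0$, $|\Omega|$, $F$, $\rho$. Applying \eqref{eq:p1-e3} centred at $z_{j+1}$ with radii $a<2a<\vartheta r$ and using both $B_a(z_j)\subset B_{2a}(z_{j+1})$ and $\int_{B_r(z_{j+1})}|u|^2\le1$ yields the recurrence
\begin{equation*}
\int_{B_a(z_{j+1})}|u|^2\ \ge\ C^{-1/\delta}\Big(\int_{B_a(z_j)}|u|^2\Big)^{1/\delta},\qquad j=0,\dots,m-1 ,
\end{equation*}
where $C,\delta$ are the constants of Theorem \ref{theo:3spheres-LNW} for the fixed ratios $a/r\approx\vartheta/2$, $2a/r\approx\vartheta$, hence depend only on $n,l,K_0$. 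Iterating this inequality $m\le N$ times from $\int_{B_a(z_0)}|u|^2\ge(2N_0)^{-1}$ gives a bound $\int_{B_a(x)}|u|^2\ge\kappa>0$ with $\kappa$ depending only on $C$, $\delta$, $N$, $N_0$, hence only on $n$, $l$, $K_0$, $r_0$, $M_0$, $|\Omega|$, $F$ and $\rho$. Since $a\le\rho$ we get $\int_{B_\rho(x)}|u|^2\ge\int_{B_a(x)}|u|^2\ge\kappa$, and removing the normalisation gives \eqref{eq:p2-e4} with $C_\rho=\kappa$.

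The genuinely delicate point is the first step: one needs the fractional Hardy (equivalently a trace-type) inequality on $\Omega$ with the constant controlled \emph{explicitly} by the Lipschitz parameters $r_0,M_0$ and $|\Omega|$, so that the resulting layer thickness $d_0$ — and therefore the final constant $C_\rho$ — depends on $u$ only through the frequency ratio $F$. The quantitative connectedness of $\Omega_d$ for $d\lesssim r_0$ and the bookkeeping in the chain argument (number of balls, admissibility of the radii in Theorem \ref{theo:3spheres-LNW}, solving the recurrence) are routine. Note that $F$ enters the final constant twice: through $d_0$ in the Hardy step, and through $d=\min\{d_0,4\rho/\vartheta\}$, which affects both $N_0$ and $N$.
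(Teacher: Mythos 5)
Your argument is correct, and at its core it is the same mechanism as the paper's: an iterated three--spheres (chain) argument combined with the observation that the ratio $\|u\|_{H^{1/2}(\Omega)}/\|u\|_{L^2(\Omega)}$ controls how much $L^2$--mass can concentrate in a thin boundary layer. The differences are in execution. For the boundary layer you invoke a fractional Hardy inequality on a Lipschitz domain (valid since $s<\tfrac12$, with constant depending on $r_0,M_0,|\Omega|$), whereas the paper gets the same conclusion more elementarily via H\"older's inequality, the Sobolev embedding $H^{1/2}(\Omega)\hookrightarrow L^{2n/(n-1)}(\Omega)$, and the thin--layer measure bound $|\Omega\setminus\Omega_d|\leq Cd$ (see \eqref{eq:p2bis-eC}--\eqref{eq:p2ter-eF}); if you prefer not to track the Hardy constant through the Lipschitz parameters, you could substitute that step verbatim. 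You also run the chain in the opposite direction: you pigeonhole a ball carrying a definite fraction of the mass inside $\Omega_{d_0}$ and propagate \emph{largeness} along a path to $B_a(x)$, while the paper (citing Proposition $4.1$ of \cite{l:amr02}) propagates \emph{smallness} from $B_\rho(x)$ over all of $\Omega_{5\rho/\vartheta}$, obtaining \eqref{eq:p2bis-eA}, and then compares $\|u\|_{L^2(\Omega_{5\rho/\vartheta})}$ with $\|u\|_{L^2(\Omega)}$. Finally, your choice $d=\min\{d_0,4\rho/\vartheta\}$ treats all $\rho$ uniformly, whereas the paper restricts to $\rho\leq\bar\rho$ (with $\bar\rho$ determined by the frequency ratio) and disposes of larger $\rho$ trivially; both bookkeeping schemes yield the stated dependence of $C_\rho$. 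The quantitative connectedness of $\Omega_d$ and the path--length bound you assume are of the same standard nature as the covering facts the paper takes from \cite{l:ar98}, \cite{l:amr02}, so they do not constitute a gap, though a reference would be appropriate.
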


\begin{proof}
By an iterative application of the three spheres inequality
\eqref{eq:p1-e3} over balls having fixed values of the ratio
$r_1/r_3$, $r_2/r_3$, and by repeating the arguments in
\cite[Proposition $4.1$]{l:amr02}  we have
\begin{equation}
    \label{eq:p2bis-eA}
    \frac{\|u\|_{L^2 (\Omega_{\frac{5\rho} {\theta}}  )}}
    {\|u\|_{L^2(\Omega)}}
    \leqslant
    \frac{C}{\rho^{n/2}}
    \left( \frac{\|u\|_{L^2(B_{\rho}(x))}}
    {\|u\|_{L^2(\Omega)}}
    \right) ^{\delta^{L}},
\end{equation}
with $L\leqslant \frac{|\Omega|} {\omega_n\rho^n}$. Here, the
constants $C>0$ and $\delta$, $0<\delta<1$, only depend on $n, l,
K_0$.

We can rewrite the square of the left-hand side of
\eqref{eq:p2bis-eA} as
\begin{equation}
    \label{eq:p2bis-eB}
    \frac{\|u\|_{L^2  (\Omega_{\frac{5\rho} {\theta}}  )}^2}
    {\|u\|_{L^2(\Omega)}^2}
    =
    1-
    \frac{\|u\|_{L^2 (\Omega \setminus \Omega_{\frac{5\rho} {\theta}}  )}^2}
    {\|u\|_{L^2(\Omega)}^2}.
\end{equation}
By H\"{o}lder's inequality
\begin{equation}
    \label{eq:p2bis-eC}
    \|u\|^2_{L^2(\Omega\setminus\Omega_ {\frac{5\rho} {{\theta}}})}
    \leqslant
    \left|\Omega\setminus \Omega_{\frac{5\rho} {{\theta}}}\right|^{1/n} \|u\|^2_{L^{2n/(n-1)}
     (\Omega\setminus\Omega_{\frac{5\rho}
    {{\theta}}}  )}
\end{equation}
and by Sobolev inequality (see, for instance, \cite{l:ad75})
\begin{equation}
    \label{eq:p2bis-eD}
    \|u\|^2_{L^{2n/(n-1)}(\Omega)}\leqslant C\|u\|^2_{H^{1/2}(\Omega)},
\end{equation}
we have
\begin{equation}
    \label{eq:p2ter-eE}
    \|u\|^2_{L^2  (\Omega\setminus\Omega_ {\frac{5\rho}{{\theta}}}  )}
    \leqslant
    C\left |\Omega\setminus \Omega_{\frac{5\rho}{{\theta}}} \right |^{1/n} \|u\| ^2_{H^{1/2}(\Omega)},
\end{equation}
where $C>0$ only depends on $r_0$, $M_0$ and $|\Omega|$.

Moreover,
\begin{equation}
    \label{eq:p2ter-eF}
    \left |\Omega\setminus \Omega_{\frac{5\rho} {{\theta}}} \right |\leqslant C\rho,
\end{equation}
where $C>0$ only depends on $r_0$, $M_0$ and $|\Omega|$ (see
estimate $(A.3)$ in \cite{l:ar98} for details).

By \eqref{eq:p2bis-eB}, \eqref{eq:p2ter-eE} and
\eqref{eq:p2ter-eF} we have that there exists $\bar{\rho}>0$, only
depending on $r_0, M_0, |\Omega|$ and
$\|u\|_{H^{1/2}(\Omega)}/\|u\|_{L^{2}(\Omega)}$ such that
\begin{equation}
    \label{eq:p2ter-eG}
    \frac{\|u\|_{L^2 (\Omega_{\frac{5\rho} {{\theta}}} )}^2}
    {\|u\|_{L^2(\Omega)}^2}
    \geqslant
    \frac{1} {2},
\end{equation}
for every $\rho$, $0<\rho\leqslant \bar{\rho}$.

Therefore, {from} \eqref{eq:p2bis-eA} and \eqref{eq:p2ter-eG} the
thesis follows when $0<\rho\leqslant \overline{\rho}$.

For larger values of $\rho$, inequality \eqref{eq:p2-e4} is
trivial.
\end{proof}
\begin{theo}[Local doubling inequality]
  \label{theo:localDoubl}
    Let $u\in H^{2l}(B_1, \R^n)$ be a nontrivial solution to \eqref{eq:p1-e2}
    in $B_1\subset\R^n$.
    There exist constants  $R_0\in (0,1)$, $\vartheta^*\in (0,\frac{1}{2})$,
    $K>0$ such that
\begin{equation}
  \label{eq:p3-e4}
  \int_{B_{2r}} |u|^2 dx \leq K
  \int_{B_{r}} |u|^2 dx, \quad \hbox{for every } r,\ 0<r\leq\vartheta^*\ .
\end{equation}
Here $R_0$ only depends on $n, l, K_0$, whereas $\vartheta^*$, $K$
only depend on $n, l, K_0$ and on the ratio
\begin{equation}
  \label{eq:freqloc}
    F_{loc} = \frac{ \|u\|_{L^2(B_{R_0^2})} }{  \|u\|_{L^2(B_{R_0^4})}  }
   \ .
\end{equation}
\end{theo}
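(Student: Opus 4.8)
The plan is to derive the local doubling inequality \eqref{eq:p3-e4} by combining the three-spheres inequality of Theorem \ref{theo:3spheres-LNW} with a classical doubling/monotonicity argument. Actually, the most direct route is simply to invoke Theorem $1.3$ of \cite{l:lnw08}: that result furnishes, for a nontrivial solution of \eqref{eq:p1-e2} on $B_1$, a doubling inequality on small balls with a constant $K$ that is controlled in terms of a frequency-type ratio measuring the growth of $u$ near the center. What must be checked is that the dependence stated there can be repackaged in the clean form \eqref{eq:freqloc}, i.e.\ in terms of the single ratio $F_{loc}=\|u\|_{L^2(B_{R_0^2})}/\|u\|_{L^2(B_{R_0^4})}$, with $R_0$ a purely geometric quantity depending only on $n,l,K_0$.

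First I would fix the geometric constants. Pick $R_0\in(0,1)$ small enough (depending only on $n,l,K_0$) that all the balls $B_{R_0^k}$ for the finitely many values of $k$ we need are admissible in the three-spheres inequality, i.e.\ the ratios $r_1/r_3, r_2/r_3$ are bounded below and $r_2<\vartheta r_3$, with $\vartheta$ the constant from Theorem \ref{theo:3spheres-LNW}. Then I would show that for any dyadic scale $r\le\vartheta^*$, the three-spheres inequality applied on an appropriate triple of concentric balls gives $\int_{B_{2r}}|u|^2\le C\big(\int_{B_r}|u|^2\big)^\delta\big(\int_{B_{R}}|u|^2\big)^{1-\delta}$ for a fixed intermediate radius $R$ (say $R=R_0$), and then bound the ``outer'' factor $\int_{B_R}|u|^2$ from above and $\int_{B_r}|u|^2$ from below using the nontriviality of $u$. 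The key point is that the quantitative lower bound on $\int_{B_r}|u|^2$ in terms of $\int_{B_{R_0^4}}|u|^2$ — which is exactly what prevents the constant from being universal — is controlled by $F_{loc}$: iterating the three-spheres inequality downward from $B_{R_0^2}$ to $B_{R_0^4}$ and comparing produces a bound on how fast $\|u\|_{L^2(B_r)}$ can decay, hence a doubling constant depending only on $n,l,K_0,F_{loc}$.

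The main obstacle is the passage from the growth/frequency bound to a genuine doubling inequality valid for \emph{all} $r\in(0,\vartheta^*]$, not just a single scale. The standard device is to observe that the three-spheres inequality, rewritten in logarithmic form, shows that the function $r\mapsto \log\int_{B_r}|u|^2$ is, up to controlled error terms, convex in $\log r$; this ``log-convexity with a remainder'' is what upgrades a one-scale comparison into the scale-invariant statement \eqref{eq:p3-e4}. One then needs to check that the slope of this quasi-convex function at the scale $R_0$ is bounded by a function of $F_{loc}$, and that $\vartheta^*$ can be chosen small enough (depending on that slope bound, hence on $F_{loc}$) so that the doubling constant stays uniformly bounded on $(0,\vartheta^*]$. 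I would also remark that the requirement $\vartheta^*<\frac12$ is automatic from the construction, and that all constants $C$ arising in intermediate inequalities depend only on $n,l,K_0$ by Theorem \ref{theo:3spheres-LNW}. Since \cite[Theorem $1.3$]{l:lnw08} already carries out precisely this argument (allowing moreover singular lower-order coefficients), the cleanest exposition is to state \eqref{eq:p3-e4} as a specialization of that theorem and devote the proof only to verifying the claimed form of the constants' dependence.
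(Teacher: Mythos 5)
Your proposal is correct and matches the paper's approach: the paper's proof of Theorem \ref{theo:localDoubl} consists precisely in invoking Theorem $1.3$ of \cite{l:lnw08} and noting that the statement is merely an adaptation of notation and an explicit bookkeeping of how $R_0$, $\vartheta^*$, $K$ depend on $n$, $l$, $K_0$ and $F_{loc}$. Your additional sketch of a self-contained three-spheres/log-convexity derivation is not needed (and is not carried out in the paper), but your concluding choice to present the result as a specialization of the cited theorem is exactly what the authors do.
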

\begin{proof}
We refer to Theorem $1.3$ in \cite{l:lnw08}. The present statement
is merely adapted in terms of notation and of a more explicit
expression of the dependencies of the various constants $R_0$,
$\vartheta^*$, $K$.
\end{proof}
\begin{theo}[Doubling inequality]
  \label{theo:Doubling-LNW}
    Let $\Omega$ be a bounded domain in $\R^n$, $n \geq 2$,
    with boundary of Lipschitz class with constants $r_0$,
    $M_0$ and let $u\in H^{2l}(\Omega, \R^n)$ be a nontrivial solution
    to \eqref{eq:p1-e2}.
    There exists a constant $\vartheta$, $0<\vartheta < 1$,
    only depending on $n, l, K_0$, such that for every $\bar r>0$
    and for every $x_0\in\Omega_{\bar r}$, we have
\begin{equation}
  \label{eq:p3-e4_bis}
  \int_{B_{2r}(x_0)} |u|^2 dx \leq K
  \int_{B_{r}(x_0)} |u|^2 dx, \quad \hbox{for every } r,\ 0<r\leq \frac{\vartheta}{2}\bar
  r,
\end{equation}
where $K>0$ only depends on $n$, $l$, $K_0$, $r_0$, $M_0$,
$|\Omega|$, $\bar r$ and
$\|u\|_{H^{1/2}(\Omega)}/\|u\|_{L^2(\Omega)}$.
\end{theo}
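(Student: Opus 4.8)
The plan is to deduce the global doubling inequality \eqref{eq:p3-e4_bis} from the local doubling inequality (Theorem \ref{theo:localDoubl}) together with the Lipschitz propagation of smallness (Theorem \ref{theo:LipPropSmall}). The only missing ingredient compared to the local statement is a \emph{uniform} control, for all admissible centers $x_0$, of the frequency-type ratio $F_{loc}$ appearing in \eqref{eq:freqloc}; once this ratio is bounded by a constant depending only on the allowed data, Theorem \ref{theo:localDoubl} applies with uniform constants $R_0$, $\vartheta^*$, $K$.

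First I would fix $x_0\in\Omega_{\bar r}$ and rescale so that the ball $B_{\bar r}(x_0)$ becomes the unit ball $B_1$: set $v(y) = u(x_0 + \bar r y)$, which (after adjusting $K_0$ by powers of $\bar r$, which are controlled since $\bar r$ is among the listed dependencies) still satisfies a differential inequality of the form \eqref{eq:p1-e2}, so Theorem \ref{theo:localDoubl} gives a local doubling inequality for $v$ on $B_1$, hence for $u$ on $B_{\bar r}(x_0)$, for radii $0<r\le\vartheta^*\bar r$. The constants $R_0$, $\vartheta^*$, $K$ depend on $n,l,K_0$ and on
\[
F_{loc}(x_0) = \frac{\|u\|_{L^2(B_{R_0^2\bar r}(x_0))}}{\|u\|_{L^2(B_{R_0^4\bar r}(x_0))}} \ .
\]
The numerator is trivially bounded above by $\|u\|_{L^2(\Omega)}$. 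For the denominator I would use Theorem \ref{theo:LipPropSmall} with $\rho = R_0^4\bar r$: provided $x_0$ lies in $\Omega_{4\rho/\vartheta}$ — which holds for $\bar r$ small relative to $\bar r$, i.e. after possibly shrinking the universal constant $R_0$ so that $4R_0^4/\vartheta<1$, since $x_0\in\Omega_{\bar r}$ — we get
\[
\int_{B_{R_0^4\bar r}(x_0)} |u|^2\,dx \ \ge\ C_\rho \int_\Omega |u|^2\,dx \ ,
\]
where $C_\rho$ depends only on $n,l,K_0,r_0,M_0,|\Omega|,\bar r$ and the global frequency $\|u\|_{H^{1/2}(\Omega)}/\|u\|_{L^2(\Omega)}$. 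Dividing, $F_{loc}(x_0)$ is bounded by $C_\rho^{-1/2}$, a quantity with exactly the allowed dependencies and independent of $x_0$.

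Plugging this uniform bound on $F_{loc}$ back into Theorem \ref{theo:localDoubl} yields \eqref{eq:p3-e4_bis} with $r$ restricted to $0<r\le\vartheta^*\bar r$ and with $\vartheta := \min\{2\vartheta^*, \text{(the constant from Thm \ref{theo:3spheres-LNW})}\}$ adjusted so that $\vartheta/2 \le \vartheta^*$; the constant $K$ inherits the stated dependencies. For the remaining range, i.e. radii $r$ with $\vartheta^*\bar r < r \le \frac{\vartheta}{2}\bar r$ if one wants a cleaner threshold, or more generally for any bounded range of $r$, the doubling inequality is immediate: $\int_{B_{2r}(x_0)}|u|^2 \le \int_\Omega |u|^2$ while $\int_{B_r(x_0)}|u|^2 \ge C_\rho\int_\Omega|u|^2$ again by Theorem \ref{theo:LipPropSmall} applied with $\rho$ comparable to $\bar r$, so the ratio is bounded by a constant of the required type.

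The main obstacle is purely bookkeeping: one must track how the scaling $y\mapsto x_0+\bar r y$ affects the constant $K_0$ in the differential inequality \eqref{eq:p1-e2} (lower-order terms pick up negative powers of $\bar r$), and ensure that the geometric conditions — $B_{\bar r}(x_0)\subset\Omega$, $x_0\in\Omega_{4\rho/\vartheta}$ with $\rho=R_0^4\bar r$ — are compatible, which forces $R_0$ to be chosen small in a way depending only on $n,l,K_0$. There is no genuinely new analytic difficulty beyond the two quoted theorems; the content is the observation that Lipschitz propagation of smallness converts the \emph{interior} frequency ratio $F_{loc}$, which is what the local result of Lin--Nagayasu--Wang needs, into the \emph{global} frequency ratio $\|u\|_{H^{1/2}(\Omega)}/\|u\|_{L^2(\Omega)}$, uniformly over all centers.
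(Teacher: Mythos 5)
Your proposal is correct in substance and follows the same skeleton as the paper's proof: rescale $B_{\bar r}(x_0)$ to $B_1$, invoke the local doubling inequality of Theorem \ref{theo:localDoubl}, and use the propagation of smallness of Theorem \ref{theo:LipPropSmall} to convert the interior frequency $F_{loc}$ into the global ratio $\|u\|_{H^{1/2}(\Omega)}/\|u\|_{L^2(\Omega)}$, uniformly in $x_0$. The one genuine difference is the treatment of the intermediate radii $\vartheta^*\bar r<r\le\frac{\vartheta}{2}\bar r$: the paper applies the three--spheres inequality of Theorem \ref{theo:3spheres-LNW} with $r_1=\vartheta^*\bar r$, $r_2=2r$, $r_3=\bar r$ and then bounds the resulting ratio $\|u\|^2_{L^2(B_{\bar r}(x_0))}/\|u\|^2_{L^2(B_{\vartheta^*\bar r}(x_0))}$ by Theorem \ref{theo:LipPropSmall}, whereas you estimate directly $\int_{B_{2r}(x_0)}|u|^2\le\int_{\Omega}|u|^2$ and $\int_{B_{r}(x_0)}|u|^2\ge\int_{B_{\rho}(x_0)}|u|^2\ge C_\rho\int_{\Omega}|u|^2$ with $\rho\le\vartheta^*\bar r$. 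Your variant is simpler (it avoids the three--spheres inequality altogether in this step) and yields a constant of the same quality, since in both routes the frequency enters only through $C_\rho$; just state the admissible $\rho$ precisely, e.g.\ $\rho=\min\{\vartheta^*\bar r,\frac{\vartheta}{4}\bar r\}$, so that both $\rho\le r$ and $x_0\in\Omega_{\bar r}\subseteq\Omega_{4\rho/\vartheta}$ hold, rather than ``$\rho$ comparable to $\bar r$''.

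Two small slips should be repaired, though neither is fatal. First, you cannot ``shrink the universal constant $R_0$'': it is dictated by Theorem \ref{theo:localDoubl}, and the frequency $F_{loc}$ controlling $\vartheta^*$ and $K$ is defined with that specific $R_0$. The correct fix (the paper's) is to apply Theorem \ref{theo:LipPropSmall} with $\rho=\min\{R_0^4\bar r,\frac{\vartheta}{4}\bar r\}$, which makes the condition $x_0\in\Omega_{4\rho/\vartheta}$ automatic for $x_0\in\Omega_{\bar r}$, and then use monotonicity, $\|u\|_{L^2(B_{R_0^4\bar r}(x_0))}\ge\|u\|_{L^2(B_\rho(x_0))}$. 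Second, defining $\vartheta$ as $\min\{2\vartheta^*,\dots\}$ is not admissible: $\vartheta^*$ depends (through $F_{loc}$) on the solution, while the statement requires $\vartheta$ to depend only on $n,l,K_0$. Since your intermediate--range argument covers all radii up to $\frac{\vartheta}{2}\bar r$ whenever $\vartheta^*<\frac{\vartheta}{2}$, you should simply fix $\vartheta$ as a constant depending only on $n,l,K_0$ (the paper keeps the three--spheres constant of Theorem \ref{theo:3spheres-LNW}); with that choice your argument closes with exactly the stated dependencies of $K$.
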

\begin{proof}
By the unique continuation property, $u$ is a nontrivial solution
to \eqref{eq:p1-e2} in $B_{\bar r}(x_0)\subset\Omega$.

Let
\begin{equation*}
   v(y)=u(\bar ry+x_0).
\end{equation*}
Then $v\in H^{2l}(B_1,\R^n)$ is a nontrivial solution in $B_1$ to
\begin{equation}
  \label{eq:p1-e2_traslata}
  |\Delta^l v^i(y)| \leq \widetilde{K_0} \sum_{|\alpha|\leq \left [\frac{3l}{2} \right ]}
  |D^{\alpha}v(y)| \ ,
  \ \ \ i=1,...,n\ ,
\end{equation}
with $\widetilde{K_0}$ only depending on $n$, $l$, $K_0$, $\bar
r$.

By theorem \ref{theo:localDoubl} and coming back to the old
variables, we have
\begin{equation}
  \label{eq:doubling_piccoliraggi}
  \int_{B_{2s}(x_0)} |u|^2 dx \leq K
  \int_{B_{s}(x_0)} |u|^2 dx, \quad \hbox{for every } s,\ 0<s\leq \vartheta^*\bar
  r,
\end{equation}
with $\vartheta^*\in (0,\frac{1}{2})$, $K>0$ only depending on
$n$, $l$, $K_0$, $\bar r$ and, increasingly, on
\begin{equation}
  \label{eq:F_tilde}
    \widetilde{F}_{loc} = \frac{ \|u\|_{L^2(B_{R_0^2\bar r}(x_0))} }
    {  \|u\|_{L^2(B_{R_0^4\bar r}(x_0))}  }
\end{equation}
Let $\vartheta\in (0,1)$ be the constant introduced in Theorem
\ref{theo:3spheres-LNW}. If $\vartheta^*\geq\frac{\vartheta}{2}$,
then \eqref{eq:doubling_piccoliraggi} holds for
$s\leq\frac{\vartheta}{2}\bar r$. Otherwise, given $s\in
(\vartheta^*\bar r, \frac{\vartheta}{2}\bar r)$, by applying
Theorem \ref{theo:3spheres-LNW} with $r_1=\vartheta^*\bar r$,
$r_2=2s$, $r_3=\bar r$, we have
\begin{equation}
  \label{eq:doubling+3sfere1}
  \int_{B_{2s}(x_0)} |u|^2 dx \leq C\left(\int_{B_{\vartheta^*\bar r}(x_0)}
  |u|^2 dx\right)^\delta
  \left(\int_{B_{\bar r}(x_0)},
  |u|^2 dx\right)^{1-\delta}
\end{equation}
with $\delta\in(0,1)$, $C>0$ only depending on $n$, $l$, $K_0$,
$\frac{r_1}{r_3}=\vartheta^*$, $\frac{r_2}{r_3}=\frac{2s}{\bar
r}$. Let us notice that the constant $C$ depends increasingly on
$\frac{r_2}{r_3}=\frac{2s}{\bar r}<\vartheta$. Since $\vartheta$
only depends on $n$, $l$, $K_0$, we have that $C$ only depends on
$n$, $l$, $K_0$, $\vartheta^*$. We have
\begin{equation}
  \label{eq:doubling+3sfere2}
  \frac{\int_{B_{2s}(x_0)} |u|^2 dx}{\int_{B_{\vartheta^*\bar r}(x_0)}
  |u|^2 dx} \leq C\left(\frac{\int_{B_{\bar r}(x_0)}
  |u|^2 dx}{\int_{B_{\vartheta^*\bar r}(x_0)}
  |u|^2 dx}\right)^{1-\delta}\leq C\frac{\int_{B_{\bar r}(x_0)}
  |u|^2 dx}{\int_{B_{\vartheta^*\bar r}(x_0)}
  |u|^2 dx},
\end{equation}
and therefore, recalling that $\vartheta^*\bar r<s$, we have
\begin{equation}
  \label{eq:doubling+3sfere3}
  \int_{B_{2s}(x_0)} |u|^2 dx\leq C\frac{\|u\|^2_{L^2(B_{\bar r}(x_0))}}
  {\|u\|^2_{L^2(B_{\vartheta^*\bar r}(x_0))}}
  \int_{B_{s}(x_0)} |u|^2
  dx,
\end{equation}
for every $s$, $\vartheta^*\bar r<s<\frac{\vartheta}{2}\bar r$.

Let us estimate $\widetilde{F}_{loc}$. Let $\rho=\min\{R_0^4\bar
r,\frac{\vartheta}{4}\bar r\}$. By applying Theorem
\ref{theo:LipPropSmall}, we have
\begin{equation}
  \label{eq:stima_Floc}
    \widetilde{F}_{loc}
    \leq
    \frac{ \|u\|_{L^2(\Omega)} }{ \|u\|_{L^2(B_\rho(x_0))}
    }\leq \frac{1}{\sqrt{C_\rho}},
\end{equation}
with $C_\rho$ only depending on $n$, $l$, $K_0$, $r_0$, $M_0$,
$|\Omega|$, $\bar r$ and
$\|u\|_{H^{1/2}(\Omega)}/\|u\|_{L^2(\Omega)}$.

Therefore $\vartheta^*$ and the constant $C$ appearing in
\eqref{eq:doubling+3sfere3} only depend on the above constants.

We can now estimate $\|u\|_{L^2(B_{\bar
r}(x_0))}/\|u\|_{L^2(B_{\vartheta^*\bar r}(x_0))}$ with the same
quantities by applying analogously Theorem
\ref{theo:LipPropSmall}.

The thesis follows {}from \eqref{eq:doubling_piccoliraggi} and
\eqref{eq:doubling+3sfere3}.
\end{proof}

\begin{prop}
    \label{prop:Reduction}
    Let $\Omega$ be a bounded domain in $\R^n$, $n \geq 2$. Let
    the Lam\'e moduli $\mu, \lambda \in C^{2,1}(
    \overline{\Omega})$ satisfy the strong ellipticity conditions
\begin{equation}
    \label{eq:p4-e6}
    \mu(x) \geq \alpha_0>0, \quad 2\mu(x)+\lambda(x) \geq
    \beta_0 >0 \quad \hbox{for every } x\in \overline{\Omega}
\end{equation}
and the upper bound
\begin{equation}
    \label{eq:p4-e7}
    \|\mu\|_{C^{2,1}(\overline{\Omega})}+\|\lambda\|_{C^{2,1}(\overline{\Omega})}
    \leq M,
\end{equation}
where $\alpha_0$, $\beta_0$, $M$ are given positive constants.
Then, there exists a positive constant $K_0$ only depending on $n,
\alpha_0, \beta_0, M$ such that, for every solution $u \in
H_{loc}^4 (\Omega, \R^n)$ of the Lam\'e system
\begin{equation}
  \label{eq:p4-e9}
  \mathrm{div} (\mu (\nabla u + (\nabla u)^T)) + \nabla(\lambda \mathrm{div}
  u)=0,
  \quad \hbox{in}\  \Omega \ ,
\end{equation}
we also have
\begin{equation}
    \label{eq:p4-e8}
    |\Delta^2 u^i| \leq K_0 \sum_{|\alpha|=1}^3 \left | D^{\alpha} u \right |, \quad
    i=1,...,n\ .
\end{equation}
\end{prop}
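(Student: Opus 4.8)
The plan is to rewrite the Lamé system so that the Laplacian of each component appears explicitly, and then to differentiate to produce a factor of $\Delta$ on each component twice more. Starting from \eqref{eq:p4-e9}, I would expand the divergence: writing the system componentwise,
\begin{equation*}
  \mu \Delta u^i + \partial_i(\lambda \divrg u) + (\nabla\mu)\cdot(\nabla u^i + \partial_i u) + \partial_i\mu\,\divrg u = 0,
\end{equation*}
where all products are understood as the appropriate contractions and I have used $\divrg(\mu(\nabla u)^T)_i = \partial_i(\mu\,\divrg u)$ rearranged. Since $\mu \geq \alpha_0 > 0$ by \eqref{eq:p4-e6}, I can divide through and obtain
\begin{equation*}
  \Delta u^i = -\frac{1}{\mu}\Bigl( \partial_i(\lambda\,\divrg u) + (\nabla\mu)\cdot(\nabla u^i + \partial_i u) + \partial_i\mu\,\divrg u\Bigr) =: g^i,
\end{equation*}
so that $\Delta u^i$ is a first-order operator in $\divrg u$ and in $\nabla u$ with coefficients built from $\mu,\lambda$ and their first derivatives (hence bounded by a constant depending only on $n,\alpha_0,M$). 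The key structural point, exactly as in the scalar reduction and as in the approach behind \eqref{intdiv}, is that taking the divergence of the whole system produces a scalar equation for $\divrg u$ of the form $(2\mu+\lambda)\Delta(\divrg u) = (\text{first order in }\nabla u, \divrg u)$, and since $2\mu+\lambda \geq \beta_0 > 0$ one similarly solves for $\Delta(\divrg u)$ as a bounded first-order expression.

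Next I would apply $\Delta$ to the identity $\Delta u^i = g^i$, giving $\Delta^2 u^i = \Delta g^i$. Now $g^i$ is a linear combination, with $C^{1,1}$ coefficients (these are quotients and products of $\mu,\lambda$ and their derivatives up to order one, controlled via \eqref{eq:p4-e7} since $\mu,\lambda\in C^{2,1}$ and $\mu,2\mu+\lambda$ are bounded below), of the quantities $\partial_j u^k$ and $\partial_j(\divrg u)$. Applying $\Delta$ and expanding by the Leibniz rule, each term of $\Delta g^i$ is a sum of: (a) the coefficient times $\Delta$ of a first derivative of $u$ or of $\divrg u$, i.e.\ a term like (coeff)$\cdot \partial_j \Delta u^k$ or (coeff)$\cdot\partial_j\Delta(\divrg u)$; (b) (first derivative of coeff) times (second derivative of $\partial_j u$ or $\partial_j\divrg u$), i.e.\ a third-order term in $u$; (c) (second derivative of coeff, which is still bounded because coeff $\in C^{1,1}$) times a first derivative of $u$. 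For terms of type (a) I substitute back: $\Delta u^k = g^k$ and $\Delta(\divrg u) = $ the bounded first-order expression found above, so $\partial_j \Delta u^k = \partial_j g^k$ is again first order in the second derivatives of $u$; all such contributions are therefore bounded by $\sum_{1\le|\alpha|\le 3}|D^\alpha u|$ with a constant depending only on $n,\alpha_0,\beta_0,M$. Collecting (a), (b), (c) yields exactly \eqref{eq:p4-e8}.

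The main obstacle — really the only place requiring care rather than bookkeeping — is controlling the coefficients that arise after two differentiations. One must check that after dividing by $\mu$ and by $2\mu+\lambda$ the resulting coefficient functions lie in $C^{1,1}(\overline\Omega)$ with norm bounded in terms of $\alpha_0,\beta_0,M$ only; this uses that $1/\mu$ and $1/(2\mu+\lambda)$ are $C^{2,1}$ with norms controlled by the lower bounds $\alpha_0,\beta_0$ and by $M$ (via the chain rule and the quotient rule), and that products of $C^{2,1}$ functions are $C^{2,1}$. Then, crucially, in $\Delta g^i$ one never needs more than two derivatives of any coefficient, so $C^{1,1}$ regularity of the reduced coefficients — equivalently $C^{2,1}$ of $\mu,\lambda$ — is exactly enough, and this is why the hypothesis $\mu,\lambda\in C^{2,1}$ rather than $C^{1,1}$ is imposed. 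Everything else is the Leibniz rule applied to finitely many terms and the elementary estimate $|D^\alpha u|\le \sum_{1\le|\beta|\le 3}|D^\beta u|$ for $1\le|\alpha|\le 3$; I would simply absorb all numerical factors into the single constant $K_0$.
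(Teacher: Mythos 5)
Your proposal is correct and takes essentially the same route as the paper: rewrite the system as $\mu\Delta u^i+(\mu+\lambda)\partial_i(\divrg u)=$ first--order terms, take the divergence and use $2\mu+\lambda\ge\beta_0$ to control $\Delta(\divrg u)$, then apply the Laplacian and use $\mu\ge\alpha_0$ together with the $C^{2,1}$ bound on $\mu,\lambda$ (so that at most three derivatives of the moduli appear, a.e.) to isolate $\Delta^2 u^i$ --- the paper merely phrases the last step as eliminating $\nabla(\Delta(\divrg u))$ between two identities instead of dividing by $\mu$ and $2\mu+\lambda$ first, a cosmetic difference. Only note that your displayed componentwise expansion mis-writes the coupling term (it should be $(\mu+\lambda)\partial_i(\divrg u)+\partial_i\lambda\,\divrg u$, equivalently $\partial_i((\mu+\lambda)\divrg u)-\partial_i\mu\,\divrg u$), but your subsequent reasoning uses the correct structure, as the appearance of $2\mu+\lambda$ after taking the divergence confirms.
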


\begin{rem}
  \label{rem:dipag4}
  Let us notice that, being $\mu, \lambda \in C^{2,1}(\overline{\Omega})$,
  by interior regularity estimates for the Lam\'e system, we
  have that for any weak solution $u$ to \eqref{eq:p4-e9} we also have $u \in H_{loc}^4(\Omega,\R^n)$. See, for instance,
  \cite{l:camp80}. Consequently \eqref{eq:p4-e8} is indeed valid
  for any weak solution to the Lam\'e system.
\end{rem}

\begin{proof}
In what follows we denote
\begin{equation}\label{vector}
\Pi = \left( \begin{array}{c} \mu \\ \lambda \end{array} \right) \
,
\end{equation}
and, for any function $v$ we denote by $D^k v$ the set of all
derivatives of order $k$ of $v$. Moreover, we shall denote by $B_j
(X;Y)$, $j=1,2,\ldots$ bilinear (vector valued) functions of the
vectors (or tensors) $X$ and $Y$, their explicit expression shall
vary from line to line.

We can rewrite \eqref{eq:p4-e9} as follows
\begin{equation}
    \label{eq:p5-e10}
    \mu \Delta u^j + (\mu+\lambda)(\divrg
    u)_{x_j}=B_j(D\Pi;Du), \quad j=1,...,n \ .
\end{equation}
Differentiating by $x_j$ and summing up, we have
\begin{equation}
    \label{eq:p5-e11}
    (2\mu+\lambda) \Delta (\divrg u) = B_1(D\Pi;D^2u)+B_2(D^2\Pi;Du) \ ,
\end{equation}
Differentiating once more into equation \eqref{eq:p5-e11}, we
obtain, in the almost everywhere sense,
\begin{equation}
    \label{eq:p6-e12}
    (2\mu+\lambda) \nabla(\Delta(\divrg u))
    = B_1(D\Pi;D^3u)+B_2(D^2\Pi;D^2u)+B_3(D^3\Pi;Du) \ .
\end{equation}
By applying the Laplacian to  \eqref{eq:p5-e10} we also have
\begin{equation}
    \label{eq:p6-e13}
    \mu \Delta^2 u + (\mu + \lambda) \nabla(\Delta(\divrg u))= B_1(D\Pi;D^3u)+B_2(D^2\Pi;D^2u)+B_3(D^3\Pi;Du) \
    ,
\end{equation}
in the almost everywhere sense.

With the aid of the strong ellipticity conditions \eqref{eq:p4-e6}
we can eliminate the term $\nabla(\Delta(\divrg u))$ from the
equations \eqref{eq:p6-e12} and \eqref{eq:p6-e13}. Recalling the
bounds \eqref{eq:p4-e7} we arrive at \eqref{eq:p4-e8}.
\end{proof}

\begin{theo} [Global doubling inequality]
    \label{prop:Doubling_global}
    Let $\Omega$ be a bounded domain in $\R^n$, $n \geq 2$, with boundary of
    Lipschitz class with constants $r_0$,
    $M_0$. Let $u \in H^1(\Omega, \R^n)$ be a weak solution to the
    boundary value problem \eqref{eq:2.22}, \eqref{eq:2.23} satisfying
    the normalization conditions \eqref{eq:2.25bis}.
    Let $\mu$, $\lambda \in C^{2,1}( \overline{\Omega})$
    satisfy the regularity condition \eqref{eq:p4-e7} and
    the strong convexity condition
    \eqref{eq:strong-convex}.

    There exists a constant $\vartheta$, $0<\vartheta < 1$,
    only depending on $n, \alpha_0, \gamma_0, M$, such that for every $\bar r>0$
    and for every $x_0\in\Omega_{\bar r}$, we have
\begin{equation}
  \label{eq:p3-e4_ter}
  \int_{B_{2r}} |u|^2 dx \leq K
  \int_{B_{r}} |u|^2 dx, \quad \hbox{for every } r, 0<r\leq \frac{\vartheta}{2}\bar
  r,
\end{equation}
where the constant $K>0$ only depends on $n$, $\alpha_0$,
$\gamma_0$, $M$, $r_0$, $M_0$, $|\Omega|$, $\bar r$ and $ \|
\varphi\|_{H^{-1/2}(\partial \Omega)}/\|
\varphi\|_{H^{-1}(\partial \Omega)}$.
\end{theo}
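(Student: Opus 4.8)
The plan is to combine Proposition~\ref{prop:Reduction}, Theorem~\ref{theo:Doubling-LNW}, and a priori estimates for the traction problem that convert the interior frequency function $\|u\|_{H^{1/2}(\Omega)}/\|u\|_{L^2(\Omega)}$ into the boundary ratio $\|\varphi\|_{H^{-1/2}(\partial\Omega)}/\|\varphi\|_{H^{-1}(\partial\Omega)}$. \emph{Step 1 (reduction).} The strong convexity condition \eqref{eq:strong-convex} implies strong ellipticity \eqref{eq:strong-ellipt} with $\beta_0=\min\{2\alpha_0,\gamma_0\}$, so by Remark~\ref{rem:dipag4} the weak solution $u$ lies in $H^4_{\mathrm{loc}}(\Omega,\R^n)$ and, by Proposition~\ref{prop:Reduction}, satisfies $|\Delta^2 u^i|\le K_0\sum_{1\le|\alpha|\le3}|D^\alpha u|$, hence a fortiori \eqref{eq:p1-e2} with $l=2$ (so $[3l/2]=3$), with $K_0$ depending only on $n,\alpha_0,\gamma_0,M$. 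Applying Theorem~\ref{theo:Doubling-LNW} with $l=2$ then yields, for every $\bar r>0$ and $x_0\in\Omega_{\bar r}$, the inequality \eqref{eq:p3-e4_ter} for $0<r\le\tfrac{\vartheta}{2}\bar r$, with $\vartheta$ depending only on $n,\alpha_0,\gamma_0,M$ and $K$ depending on these, on $r_0,M_0,|\Omega|,\bar r$, and on the interior frequency $N:=\|u\|_{H^{1/2}(\Omega)}/\|u\|_{L^2(\Omega)}$. (If $u\equiv0$ the statement is trivial; otherwise $\varphi\not\equiv0$ by uniqueness and the normalization \eqref{eq:2.25bis}, so the ratios below make sense.) It thus remains to bound $N$ by an increasing function, with the correct parameter dependence, of $\|\varphi\|_{H^{-1/2}(\partial\Omega)}/\|\varphi\|_{H^{-1}(\partial\Omega)}$.

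\emph{Step 2 (upper bound for the numerator).} Testing the weak formulation of \eqref{eq:2.22}--\eqref{eq:2.23} with $u$, using the coercivity coming from \eqref{eq:strong-convex}, and combining Korn's second inequality with the normalization \eqref{eq:2.25bis} to absorb $\|u\|_{L^2(\Omega)}$, I expect the energy estimate
\[
\|u\|_{H^1(\Omega)}\le C\,\|\varphi\|_{H^{-1/2}(\partial\Omega)} ,
\]
with $C$ depending only on $n,\alpha_0,\gamma_0,M,r_0,M_0,|\Omega|$; in particular $\|u\|_{H^{1/2}(\Omega)}\le C\|\varphi\|_{H^{-1/2}(\partial\Omega)}$.

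\emph{Step 3 (lower bound for the denominator).} This is the heart of the argument: I aim to prove $\|\varphi\|_{H^{-1}(\partial\Omega)}^2\le C\,\|u\|_{L^2(\Omega)}\,\|\varphi\|_{H^{-1/2}(\partial\Omega)}$. Given $\psi\in H^1(\partial\Omega,\R^n)$, let $\Psi\in H^1(\Omega,\R^n)$ solve the Lam\'e Dirichlet problem with boundary data $\psi$. By the boundary regularity theory for the Dirichlet problem of elliptic systems in Lipschitz domains, its conormal derivative $\Lambda\psi:=\big(\mu(\nabla\Psi+(\nabla\Psi)^T)+\lambda(\divrg\Psi)I_n\big)\nu$ lies in $L^2(\partial\Omega,\R^n)$ with $\|\Lambda\psi\|_{L^2(\partial\Omega)}\le C\|\psi\|_{H^1(\partial\Omega)}$. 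Writing $\sigma(v)$ for the stress, I test the weak formulation for $u$ with $\Psi$, then use the major symmetry of the isotropic elasticity tensor (so that $\sigma(u):\nabla\Psi=\sigma(\Psi):\nabla u$ a.e.) and Green's formula for $\Psi$, obtaining
\[
\langle\varphi,\psi\rangle=\int_\Omega\sigma(u):\nabla\Psi\,dx=\int_\Omega\sigma(\Psi):\nabla u\,dx=\int_{\partial\Omega}(\Lambda\psi)\cdot u\,dS .
\]
Hence $|\langle\varphi,\psi\rangle|\le\|\Lambda\psi\|_{L^2(\partial\Omega)}\|u\|_{L^2(\partial\Omega)}\le C\|\psi\|_{H^1(\partial\Omega)}\|u\|_{L^2(\partial\Omega)}$, so $\|\varphi\|_{H^{-1}(\partial\Omega)}\le C\|u\|_{L^2(\partial\Omega)}$. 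The multiplicative trace inequality $\|u\|_{L^2(\partial\Omega)}^2\le C\|u\|_{L^2(\Omega)}\big(\|u\|_{L^2(\Omega)}+\|\nabla u\|_{L^2(\Omega)}\big)$ together with Step~2 then gives $\|\varphi\|_{H^{-1}(\partial\Omega)}^2\le C\,\|u\|_{L^2(\Omega)}\,\|\varphi\|_{H^{-1/2}(\partial\Omega)}$, i.e. $\|u\|_{L^2(\Omega)}\ge c\,\|\varphi\|_{H^{-1}(\partial\Omega)}^2/\|\varphi\|_{H^{-1/2}(\partial\Omega)}$.

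\emph{Step 4 (conclusion and main obstacle).} Dividing the bounds of Steps~2 and~3 yields $N\le C\,\big(\|\varphi\|_{H^{-1/2}(\partial\Omega)}/\|\varphi\|_{H^{-1}(\partial\Omega)}\big)^2$, and substituting this for $N$ in the dependence of $K$ furnished by Theorem~\ref{theo:Doubling-LNW} gives \eqref{eq:p3-e4_ter} with $K$ of the asserted form. The main obstacle I foresee is Step~3, namely the $L^2(\partial\Omega)$ bound on the conormal derivative $\Lambda\psi$ (equivalently, the $H^1(\partial\Omega)\to L^2(\partial\Omega)$ boundedness of the Dirichlet--to--Neumann map for the Lam\'e system), since this relies on sharp $H^{3/2}$ regularity for elliptic systems in Lipschitz domains; everything else — the reduction, the energy estimate via Korn's inequality, the multiplicative trace inequality, and the two integrations by parts — is routine, provided one keeps track that the full $C^{2,1}$ bound $M$ enters only through Proposition~\ref{prop:Reduction}, whereas the energy and boundary-regularity estimates involve only the $C^{0,1}$ norms of $\mu,\lambda$ (which are dominated by $M$).
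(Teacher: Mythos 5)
Your overall architecture coincides with the paper's: reduce the Lam\'e system to \eqref{eq:p1-e2} with $l=2$ via Proposition \ref{prop:Reduction} (using that strong convexity implies strong ellipticity and Remark \ref{rem:dipag4} for $H^4_{loc}$ regularity), apply Theorem \ref{theo:Doubling-LNW}, and then dominate the interior frequency $\|u\|_{H^{1/2}(\Omega)}/\|u\|_{L^2(\Omega)}$ by the boundary ratio through the energy estimate $\|u\|_{H^1(\Omega)}\le C\|\varphi\|_{H^{-1/2}(\partial\Omega)}$, a multiplicative trace inequality, and the key boundary estimate $\|\varphi\|_{H^{-1}(\partial\Omega)}\le C\|u\|_{L^2(\partial\Omega)}$. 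Your Steps 1, 2 and 4 are exactly what the paper does (the paper additionally uses the interpolation inequality \eqref{eq:p8-e17} and so gets the frequency bounded linearly by $\|\varphi\|_{H^{-1/2}(\partial\Omega)}/\|\varphi\|_{H^{-1}(\partial\Omega)}$, whereas you get the square of that ratio; this is harmless since $K$ depends increasingly on the frequency).

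The genuine gap is in your Step 3, and it sits precisely where the paper instead quotes an established result: the estimate \eqref{eq:p8-e19} is taken verbatim from Lemma 4.10 of \cite{l:amr03}, with constants depending only on $|\Omega|, r_0, M_0, \alpha_0, \gamma_0, M$. You attempt to re-derive it by testing against the solution $\Psi$ of the Dirichlet problem with data $\psi\in H^1(\partial\Omega)$ and asserting that its conormal derivative satisfies $\|\Lambda\psi\|_{L^2(\partial\Omega)}\le C\|\psi\|_{H^1(\partial\Omega)}$ ``by the boundary regularity theory for the Dirichlet problem of elliptic systems in Lipschitz domains.'' This is not routine: on a merely Lipschitz boundary, sharp $H^{3/2}(\Omega)$ regularity (equivalently, boundedness of the Dirichlet--to--Neumann map from $H^{1}(\partial\Omega)$ to $L^{2}(\partial\Omega)$) fails in the naive Sobolev-scale formulation and must be replaced by the solvability of the $L^2$ regularity problem with nontangential maximal function estimates, which for the elastostatics system is a deep theorem (Dahlberg--Kenig--Verchota type, proved by Rellich identities and layer potentials for constant Lam\'e moduli, and requiring a further perturbation/localization argument for variable coefficients, while keeping the constant's dependence only on $r_0$, $M_0$, the ellipticity constants and $M$). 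The same machinery is also needed to justify your Green's formula $\int_\Omega\sigma(\Psi):\nabla u = \int_{\partial\Omega}(\Lambda\psi)\cdot u$ when $u$ has only an $L^2$ trace. You correctly flag this as the main obstacle, but as written it is an unproven claim on which the whole conclusion rests. The repair is simple and is exactly the paper's choice: invoke Lemma 4.10 of \cite{l:amr03} (together with the trace inequality \eqref{eq:p8-e18} from \cite{l:gri85}) rather than re-proving the Dirichlet-to-Neumann bound.
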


\begin{proof}
By applying Theorem \ref{theo:Doubling-LNW} and Proposition
\ref{prop:Reduction} we infer that \eqref{eq:p3-e4_ter} holds with
the constant $K$ only depending on $n, \alpha_0, \beta_0, M,
|\Omega|, r_0, M_0$ and
$\|u\|_{H^{1/2}(\Omega)}/\|u\|_{L^2(\Omega)}$. By the weak
formulation of the problem \eqref{eq:2.22}, \eqref{eq:2.23} and by
the normalization conditions \eqref{eq:2.25bis} we have
\begin{equation}
  \label{eq:p8-e16}
  \|u\|_{H^{1}(\Omega)} \leq C \|\varphi\|_{H^{-1/2}(\partial
  \Omega)},
\end{equation}
where $C>0$ only depends on $n, r_0, M_0, |\Omega|, \alpha_0,
\gamma_0$. Moreover the following interpolation inequality holds
\begin{equation}
  \label{eq:p8-e17}
  \|u\|_{H^{1/2}(\Omega)}^2 \leq
  \|u\|_{H^{1}(\Omega)}\|u\|_{L^{2}(\Omega)}.
\end{equation}
Let us now recall the trace inequality (see, for instance,
\cite[Theorem 1.5.1.10]{l:gri85})
\begin{equation}
  \label{eq:p8-e18}
  \|u\|_{L^{2}(\partial \Omega)}^2 \leq C
  \|u\|_{L^{2}(\Omega)}\|u\|_{H^{1}(\Omega)},
\end{equation}
where $C$ only depends on $r_0$, $M_0$, $|\Omega|$, and the
estimate of Lemma 4.10 in \cite{l:amr03}
\begin{equation}
  \label{eq:p8-e19}
  \|\varphi\|_{H^{-1}(\partial \Omega)}\leq C \|u\|_{L^{2}(\partial
  \Omega)},
\end{equation}
where $C>0$ only depends on $|\Omega|, r_0, M_0, \alpha_0,
\gamma_0$ and $M$.

Therefore
\begin{equation}
    \label{eq:p9-e20}
    \frac{\|u\|_{H^{1/2}(\Omega)}^2  }{\|u\|_{L^{2}(\Omega)}^2}
    \leq
    \frac{\|u\|_{H^{1}(\Omega)}  }{\|u\|_{L^{2}(\Omega)}}
    \leq
    C \frac{\|u\|_{H^{1}(\Omega)}^2  }{\|u\|_{L^{2}(\partial \Omega)}^2}
    \leq C \frac{ \|\varphi\|_{H^{-1/2}(\partial \Omega)}^2 }{  \|\varphi\|_{H^{-1}(\partial \Omega)}^2   },
\end{equation}
where $C>0$ only depends on $n, r_0, M_0, |\Omega|, \alpha_0,
\gamma_0, M$.
\end{proof}

\medskip
{\bf Acknowledgment.} The authors wish to thank Ching-Lung Lin,
who kindly pointed out the quantitative estimates of strong unique
continuation for the power of the Laplacian obtained in reference
\cite{l:lnw08}.

%
%
\bibliographystyle{alpha}

\end{document}